\newtheorem{theorem}{Theorem}[section]
\newtheorem{lemma}{Lemma}[section]
\newtheorem{corollary}{Corollary}[section]
\newcommand{\be}{\begin{equation}}
\newcommand{\ee}{\end{equation}}
\newcommand{\bea}{\begin{eqnarray}}
\newcommand{\eea}{\end{eqnarray}}
\newcommand{\beas}{\begin{eqnarray*}}
\newcommand{\eeas}{\end{eqnarray*}}
\begin{document}

\setcounter{page}{1} \setlength{\unitlength}{1mm}\baselineskip
.58cm \pagenumbering{arabic} \numberwithin{equation}{section}

\title[A note on Solitons]{A note on Almost Riemann Soliton and gradient almost Riemann soliton }
\author [K.De and U.C.De]{Krishnendu De and Uday Chand De}
\address {\newline Krishnendu De, \newline Assistant Professor of Mathematics, \newline Kabi Sukanta Mahavidyalaya,
\newline Bhadreswar, P.O.-Angus, Hooghly, \newline Pin 712221, West Bengal, India.}
\email {krishnendu.de@outlook.in}
\address {\newline Uday Chand De \newline Department of Pure Mathematics \newline University of Calcutta
\newline 35, Ballygunge Circular Road \newline Kol- 700019, West Bengal, India.}
\email {uc$\_$de@yahoo.com}

\footnotetext {$\mathbf{AMS\;2010\hspace{5pt}Mathematics\; Subject\;
Classification\;}:$ 53C15, 53C25
53D15.
\\ {Key words and phrases: 3-dimensional normal almost contact metric manifold, Almost Riemann soliton, Gradient almost Riemann soliton.\\
}}
\maketitle

\vspace{.5cm}

\begin{abstract}
The quest of the offering article is to investigate \emph{almost Riemann soliton} and \emph{gradient almost Riemann soliton} in a non-cosymplectic normal almost contact metric manifold $M^3$. Before all else, it is proved that if the metric of $M^3$ is Riemann soliton with divergence-free potential vector field $Z$, then the manifold is quasi-Sasakian and is of constant sectional curvature -$\lambda$, provided $\alpha,\beta =$ constant. Other than this, it is shown that if the metric of $M^3$ is \emph{ARS} and $Z$ is pointwise collinear with $\xi $ and has constant divergence, then $Z$ is a constant multiple of $\xi $ and the \emph{ARS} reduces to a Riemann soliton, provided $\alpha,\;\beta =$constant. Additionally, it is established that if $M^3$ with $\alpha,\; \beta =$ constant admits a gradient \emph{ARS} $(\gamma,\xi,\lambda)$, then the manifold is either quasi-Sasakian or is of constant sectional curvature $-(\alpha^2-\beta^2)$. At long last, we develop an example of $M^3$ conceding a Riemann soliton.
\end{abstract}

\date{}
\vspace{.5cm}
\section{\textbf{Introduction}}
Since Einstein manifolds play out a huge job in Mathematics and material science, the examination of Einstein manifolds and their speculations is an intriguing point in Riemannian and contact geometry. Lately, various generalizations of Einstein manifolds such as Ricci soliton, gradient Einstein soliton, gradient Ricci soliton, gradient $m$-quasi Einstein soliton etc. have researched. The notion of Ricci flow was introduced by Hamilton \cite{hr} and defined by $\frac{\partial}{\partial_t}g(t)=-2S(t)$, where $S$ denotes the Ricci tensor.\par
As a spontaneous generalization, the idea of Riemann flow (\cite{ud1},\cite{ud2}) is defined by $\frac{\partial}{\partial_t}G(t)=-2Rg(t) $, $G=\frac{1}{2}g\otimes g$, where $R$ is the Riemann curvature tensor and $\otimes$ is \emph{Kulkarni-Nomizu} product (executed as (see Besse \cite{be}, p. 47),
$$(P \otimes Q)(E,F,Z,W) =P(E,W)Q(F,U) + P(F,U)Q(E,W)$$
$$-P(E,U)Q(F,W) -P(F,W)Q(E,U)).$$
Analogous to Ricci soliton, the entrancing thought of Riemann soliton was promoted by Hirica and Udriste \cite{hud}. As per Hirica and Udriste \cite{hud}, a Riemannian metric $g$ on a Riemannian manifold $M$ is called a \emph{ Riemann solitons} if there exists a $C^{\infty}$ vector field $Z$ and a real scalar $\lambda$ such that
\begin{equation}\label{aa1}
    2R+ \lambda g\otimes g+ g\otimes \pounds_Z g=0.
\end{equation}

Here we should see that, this new thought of Riemann soliton is nothing but a generalization of the space of constant sectional curvature.
The soliton will be termed as $expanding$ (if $\lambda$ $>$ 0), $steady$ (if $\lambda$ = 0) or $shrinking$ (if $\lambda$ $<$ 0), respectively.
The manifold is said to be \emph{gradient Riemann soliton} if the vector field $Z$ is gradient of the potential function $\gamma$. For this situation the forerunner condition can be composed as
\begin{equation}\label{aa2}
    2R+ \lambda g\otimes g+ g\otimes \nabla ^{2} \gamma=0,
\end{equation}
where $\nabla ^{2} \gamma$ denotes the Hessian of $\gamma$. In the event that we fix the condition on the parameter $\lambda$ to be a variable function, then the equation (\ref{aa1}) and (\ref{aa2}) turns in to \emph{ARS} and \emph{gradient ARS} respectively.  All through this paper the terminology ``almost Riemann solitons" is composed as \emph{ARS}.\par

Riemann solitons and gradient Riemann solitons on Sasakian manifolds have been investigated in detail by Hirica and Udriste (see, \cite{hud}). Furthermore, Riemann's soliton concerning infinitesimal harmonic transformation was studied in \cite{st}. In this association, we notice that Sharma in \cite{sah} explored almost Ricci soliton in $K$-contact geometry and in \cite{sah1}, with divergence-free soliton vector field. In \cite{dev}, Riemann soliton under the context of contact manifold has been studied and demonstrated a few intriguing outcomes.\par
Quite a long while prior, in \cite{oz}, Olszak explored the three dimensional normal almost contact metric (briefly, $acm$) manifolds mentioning several examples. After the citation of \cite{oz}, in recent years normal $acm$ manifolds have been studied by numerous eminent geometers (see, \cite{dm},\cite{dmaa},\cite{dys},\cite{dy} and references contained in those).\par
The above studies motivate us to investigate an \emph{ ARS} and the \emph{gradient ARS} in a $3$-dimensional normal $acm$ manifolds, since $3$-dimensional normal $acm$ manifold covers Sasakian manifold, Cosymplectic manifold, Kenmotsu manifold and Quasi-Sasakian manifold.\par
The forthcoming article is structured as:

In section 2, we reminisce about some facts and formulas of normal $acm$ manifolds, which we will require in later sections. Beginning from Section 3, after giving the proof, we will engrave our main Theorems. After that, we develop an example of a $3$-dimensional normal $acm$ manifold admitting a Riemann soliton. This exposition terminates with a concise bibliography that has been utilized during the formulation of the article.

\section{Preliminaries}

Let $M^3$ be an $acm$ manifold endowed with a triplet of almost contact structure$(\eta ,\xi ,\phi )$. In details, $M^3$ is an odd-dimensional differentiable manifold equipped with a global $1$-form $\eta$ , a unique characteristic vector field $\xi$ and a $(1,1)$-type tensor field $\phi,$ respectively, such that
\begin{equation}\label{a1}
\phi ^2 E=-E+\eta (E) \xi, \hspace{1cm} \eta (\xi )=1, \hspace{1cm} \phi
\xi =0, \hspace{1cm} \eta \circ \phi =0.
\end{equation}
A structure, named \emph{almost complex structure} $\mathcal{J}$ on $M\times \mathbb{R}$ is defined by
\begin{equation}\label{a2}
\mathcal{J}(E,\lambda \frac{d}{ds})=(\phi E-\lambda \xi ,\eta (E)\frac{d}{ds}),
\end{equation}
where $(E,\lambda \frac{d}{ds})$ indicates a tangent vector on $
M\times \mathbb{R}$, $E$ and $\lambda \frac {d}{ds}$ being tangent to $M$ and $\mathbb{R}$
respectively. After fulfilling the condition, the structure $\mathcal{J}$ is
integrable, $M (\eta ,\xi ,\phi )$ is said to be normal (see, \cite{deb},\cite{de}).\\
The \emph{Nijenhuis} torsion is defined by
\begin{equation} \nonumber [\phi, \phi](E,F)=\;\phi ^{2}[E,F]+[\phi E,\phi F]-\phi[\phi E,F]-\phi[E,\phi F].\end{equation}
The structure $(\eta ,\xi ,\phi)$ is said to be normal if and only if
\begin{equation}\label{a3}
[\phi ,\phi ]+2d\eta \otimes \xi=0.
\end{equation}
The Riemannian metric $g$ on $M^3$ is said to be compatible with $(\eta ,\xi ,\phi )$ if the condition
\begin{equation}\label{a5}
g(\phi E,\;\phi F)=\;g(E,F)-\eta (E)\;\eta (F),
\end{equation}

holds for any $E,F\in \mathfrak{X}(M)$. In such case, the quadruple $(\eta,\xi,\phi ,g)$
is termed as an \emph{$acm$ structure} on $M^3$ and $M^3$ is an \emph{$acm$ manifold}. The equation
\begin{equation}\label{a6}
\eta (E)=g(E,\xi ),
\end{equation}
is withal valid on such a manifold.\par
Certainly, we can define the fundamental $2$-form $\Phi $ by
\begin{equation}\label{a7}
\Phi (E,F)=g(E,\phi F),
\end{equation}
where $E, F\in \mathfrak{X}(M)$.\par

For a normal $acm$ manifold, we can write \cite{oz}:
\begin{equation}\label{b1}
(\nabla _E\phi )(F)=g(\phi \nabla _E\xi ,F)-\eta (F)\phi \nabla _E\xi ,
\end{equation}
\begin{equation}\label{b2}
\nabla _E\xi =\alpha [E-\eta (E)\xi ]-\beta \phi E,
\end{equation}
where $\alpha = \frac{1}{2}div \xi$ and $\beta = \frac{1}{2}tr (\phi \;\nabla \xi )$, $div \xi $ is the divergent of $\xi $ defined by $div \xi =trace \{E\longrightarrow \nabla
_E\xi \}$ and $tr(\phi \nabla \xi )=trace \{E\longrightarrow \phi \nabla
_E\xi \}$. Utilizing $(\ref{b2})$ in $(\ref{b1})$ we lead
\begin{equation}\label{b3}
(\nabla _E\;\phi )(F)=\alpha [g(\phi E,\;F)\xi -\eta (F)\;\phi E]+\beta [g(E,F)\;\xi-\;\eta (F)E].
\end{equation}

Also in this manifold the subsequent relations hold \cite{oz}:\newline
\begin{eqnarray}\label{b4}
R(E,F)\xi &=&[F\alpha +(\alpha ^2-\beta ^2)\eta (F)]\phi ^2E \notag \\
&&-[E\alpha +(\alpha ^2-\beta ^2)\eta (E)]\phi ^2F \\
&&+[F\beta +2\alpha \beta \eta (F)]\phi E \notag \\
&&-[E\beta +2\alpha \beta \eta (E)]\phi F, \notag
\end{eqnarray}
\begin{eqnarray}\label{b5}
S(E,\xi )&=&-E\alpha -(\phi E)\beta \\
&&-[\xi \alpha\; +2\;(\alpha ^2-\beta ^2)]\;\eta (E), \notag
\end{eqnarray}
\begin{equation}\label{b6}
\xi \beta\; +2\alpha \;\beta =\;0,
\end{equation}
\newline
\begin{equation}\label{b7}
(\nabla _E\eta )(F)=\alpha g(\phi E,\phi F)-\beta g(\phi E,F).
\end{equation}

It is well admitted that in a $3$-dimensional Riemannian manifold the Riemann curvature tensor is always satisfies
\begin{eqnarray}\label{b8}
R(E,F)Z&=&S(F,Z)E-S(E,Z)F+g(F,Z)Q E-g(E,Z)Q F \\
&&-\frac {r}{2}[g(F,Z)E-g(E,Z)F] . \notag
\end{eqnarray}

By $(\ref{b4})$, $(\ref{b5})$ and $(\ref{b8})$ we infer
\begin{eqnarray}\label{b9}
S(E,F)&=&\;(\frac {r}{2}+\;\xi \alpha +\alpha ^2-\;\beta ^2)g(\phi E,\phi F) \notag \\
&&-\eta (E)(F\alpha +(\phi F)\beta )\;-\eta (F)(E\alpha \;+(\phi E)\beta )\\
&&-2(\alpha ^2\;-\beta ^2)\eta (E)\eta (F). \notag
\end{eqnarray}
For $\alpha ,\beta =$constant, it follows from the above equation that a $3$-dimensional normal $acm$ manifold becomes an $\eta $-Einstein manifold.\par
From $(\ref{b3})$ we conclude that the manifold is either $\alpha $-Kenmotsu \cite{dj} or cosymplectic \cite{deb} or $\beta $-Sasakian, provided $\alpha ,\beta =$constant.
Also it is well known that a $3$-dimensional normal $acm$ manifold reduces to a quasi-Sasakian manifold if and only if $\alpha =0$ (see, \cite{oz},\cite{ols}).\par

\section{Riemann Soliton}

In this segment, we first write the subsequent result (\cite{cho},\cite{cho1}):
\begin{lemma}
In a Riemannian manifold if ($g,Z$) is a Ricci soliton, then we have
\begin{eqnarray} \label{n1}
\frac{1}{2}\lVert {\pounds_Z g}\rVert ^{2}=dr(Z)+2div(\lambda Z-Q Z).
\end{eqnarray}
\end{lemma}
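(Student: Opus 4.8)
The plan is to prove the identity by combining the defining equation of a Ricci soliton with the contracted second Bianchi identity. Recall that a Ricci soliton $(g,Z)$ satisfies $\frac{1}{2}\pounds_Z g + S - \lambda g = 0$, or equivalently $\frac{1}{2}\pounds_Z g = \lambda g - S$. First I would take the squared pointwise norm of both sides: $\frac{1}{4}\lVert \pounds_Z g\rVert^2 = \lVert \lambda g - S\rVert^2 = \langle \lambda g - S, \lambda g - S\rangle$. The more useful move, however, is to contract the soliton equation against $\pounds_Z g$ itself, writing $\frac{1}{2}\lVert \pounds_Z g\rVert^2 = \langle \pounds_Z g, \lambda g - S\rangle = \langle \pounds_Z g, \lambda g\rangle - \langle \pounds_Z g, S\rangle$. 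Since $\langle \pounds_Z g, g\rangle = 2\,\mathrm{div}\,Z$ and, after integrating by parts at the tensorial level, $\langle \pounds_Z g, T\rangle = 2\,\mathrm{div}(T(Z,\cdot)) - 2(\mathrm{div}\,T)(Z)$ for any symmetric $2$-tensor $T$, both terms on the right will turn into divergences plus curvature terms.

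Next I would apply this with $T = \lambda g$ and $T = S$. For $T = \lambda g$: $\langle \pounds_Z g, \lambda g\rangle = 2\,\mathrm{div}(\lambda Z) - 2(\mathrm{div}(\lambda g))(Z)$; since $\mathrm{div}(\lambda g) = d\lambda$, in the Ricci soliton case $\lambda$ is constant so this contributes $2\,\mathrm{div}(\lambda Z)$ cleanly (if one wants the general identity with non-constant $\lambda$, the extra term can be absorbed, but for the stated Ricci soliton lemma $\lambda$ is a genuine constant). For $T = S$: $\langle \pounds_Z g, S\rangle = 2\,\mathrm{div}(QZ) - 2(\mathrm{div}\,S)(Z)$, where $QZ$ is the $(1,1)$ Ricci operator applied to $Z$ viewed as a vector field. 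Here I would invoke the contracted second Bianchi identity $\mathrm{div}\,S = \frac{1}{2}\,dr$, so $(\mathrm{div}\,S)(Z) = \frac{1}{2}\,dr(Z)$. Assembling the pieces gives
\begin{eqnarray*}
\frac{1}{2}\lVert \pounds_Z g\rVert^2 &=& 2\,\mathrm{div}(\lambda Z) - \bigl(2\,\mathrm{div}(QZ) - dr(Z)\bigr)\\
&=& dr(Z) + 2\,\mathrm{div}(\lambda Z - QZ),
\end{eqnarray*}
which is exactly \eqref{n1}.

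The main technical obstacle is establishing the integration-by-parts identity $\langle \pounds_Z g, T\rangle = 2\,\mathrm{div}(T(Z,\cdot)^\sharp) - 2(\mathrm{div}\,T)(Z)$ at the level of pointwise functions rather than integrals, so that the conclusion holds as a genuine identity of smooth functions on $M$ (not merely after integration over a closed manifold). This is done by writing $(\pounds_Z g)(E,F) = g(\nabla_E Z, F) + g(E, \nabla_F Z)$, contracting with $T$ in an orthonormal frame, and carefully expanding $\mathrm{div}(T(Z,\cdot))$ via the Leibniz rule for the divergence of the contraction of $T$ with $Z$; the cross terms reorganize into $(\mathrm{div}\,T)(Z)$ plus the contraction $\langle \pounds_Z g, T\rangle / 2$ (using symmetry of $T$ to symmetrize $\nabla Z$). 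Once this local identity is in hand, the rest is a direct substitution using $\mathrm{div}\,S = \tfrac12 dr$ and constancy of $\lambda$, with no further difficulty. I would present the frame computation compactly, since it is routine once the bookkeeping is set up correctly.
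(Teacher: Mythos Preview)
Your argument is correct. The key pointwise identity
\[
\langle \pounds_Z g, T\rangle \;=\; 2\,\mathrm{div}\bigl(T(Z,\cdot)^\sharp\bigr) - 2(\mathrm{div}\,T)(Z)
\]
for symmetric $T$ is exactly what is needed, and your frame verification of it is sound: with a normal frame at a point one has $\langle \pounds_Z g, T\rangle = 2\sum_i T(e_i,\nabla_{e_i}Z)$, while expanding $\mathrm{div}(T(Z,\cdot)^\sharp)$ via Leibniz gives $(\mathrm{div}\,T)(Z) + \sum_i T(e_i,\nabla_{e_i}Z)$. Feeding in $T=\lambda g$ (with $\lambda$ constant, so $\mathrm{div}(\lambda g)=0$) and $T=S$ (with $\mathrm{div}\,S=\tfrac12\,dr$) then yields \eqref{n1} immediately, as you wrote.

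As for comparison with the paper: the paper does not supply a proof of this lemma at all; it simply quotes the result from Cho \cite{cho,cho1} and then applies it (with the substitution $\lambda\mapsto -2\lambda$ coming from their Riemann-soliton contraction \eqref{n3}) to obtain \eqref{n5}. So your write-up actually fills a gap the paper leaves to the literature. One minor presentational point: since the lemma is stated for a genuine Ricci soliton, you should simply assert $\lambda$ constant from the outset rather than flag the ``general identity with non-constant $\lambda$'' as an aside---that digression is irrelevant here and slightly muddies an otherwise clean argument.
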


Now, because of (\ref{b2}) we obtain

\begin{equation}\label{n2}
(\pounds_{\xi}g)(E,\;F)=2\alpha\{g(E,\;F)-\eta (E)\;\eta (F)\}.
\end{equation}

We consider a normal $acm$ manifold $M^3$ with $\alpha,\beta=$constants admitting a Riemann soliton defined by(\ref{aa1}). Using \emph{Kulkarni-Nomizu} product in (\ref{aa1}) we write
\begin{eqnarray}\label{cc1}
&&2R(E,F,W,X)+2\lambda \{ g(E,\;X)g(F,W)-g(E,W)g(F,\;X)\}\nonumber\\
&+&\{g(E,X)(\pounds_{Z}g)(F,W)+g(F,W)(\pounds_{Z}g)(U,E)\nonumber\\
&-&g(E,W)(\pounds_{Z}g)(F,X)-g(F,X)(\pounds_{Z}g)(E,W)\}=0.
\end{eqnarray}
Contracting (\ref{cc1}) over $E$ and $X$, we infer

\begin{eqnarray}\label{cc2}
(\pounds_{Z}g)(F,W)+2S(F,W)+(4\lambda+2 div Z)g(F,W)=0.
\end{eqnarray}
Thus Riemann soliton whose potential vector field is of vanishing divergence reduces to Ricci soliton.

Hence we have
\begin{eqnarray}\label{n3}
(\pounds_{Z}g)(F,W)+2S(F,W)+4\lambda g(F,W)=0.
\end{eqnarray}
Setting $Z=\xi$ and utilizing (\ref{n2}) we lead
\begin{eqnarray}\nonumber
2\alpha\{g(F,W)-\eta (F)\eta (W)\}+2S(F,W)+4\lambda g(F,W)=\;0,
\end{eqnarray}
which implies that
\begin{eqnarray}\label{n4}
\alpha\{F-\eta (F)\xi\}+Q F+2\lambda F=0.
\end{eqnarray}
Putting $F=W=e_i$ and taking $div Z=0$ from (\ref{n3}) we get
$r=-6\lambda$.
Hence in our case (\ref{n1}) takes the form
\begin{eqnarray} \label{n5}
\frac{1}{2}\lVert {\pounds_Z g}\rVert ^{2}=dr(Z)+2div(-2\lambda Z-Q Z).
\end{eqnarray}
From (\ref{n4}) we get $Q\xi=-2\lambda \xi$.
Therefore utilizing $r=-6\lambda$ and $Q\xi=-2\lambda \xi$ we obtain from (\ref{n5}) $\xi$ is a Killing vector.
Hence (\ref{n2}) implies $\alpha=0$ that is the manifold is quasi-Sasakian.\par
Utilizing $\alpha=0$ in (\ref{n4}), we infer
$$QF=-2\lambda F,$$. Hence from (\ref{b8}) we can write that the manifold is of constant sectional curvature $-\lambda$. Therefore we write:\\

\begin{theorem} If the metric of a non-cosymplectic normal $acm$ manifold $M^3$ is Riemann soliton with a divergence-free potential vector field, then the manifold is quasi-Sasakian and is of constant sectional curvature $-\lambda$, provided $\alpha,\beta =$constant .\end{theorem}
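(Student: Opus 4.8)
The plan is to start from the Riemann soliton equation (\ref{aa1}), expand the Kulkarni--Nomizu products explicitly, and then perform a contraction to descend from a curvature identity to a Ricci-type identity. Concretely, writing out $R + \lambda\, g\otimes g + g\otimes\pounds_Z g = 0$ on four vector fields and then tracing over one pair of slots, the $g\otimes g$ term contributes a multiple of $\lambda g$, the $g\otimes R$ term yields the Ricci tensor $S$ (up to a constant), and the $g\otimes\pounds_Z g$ term yields $\pounds_Z g$ together with a $(\operatorname{div} Z)\,g$ term. This produces the relation $(\pounds_Z g)(F,W) + 2S(F,W) + (4\lambda + 2\operatorname{div} Z)\,g(F,W) = 0$, exactly (\ref{cc2}). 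Imposing the hypothesis $\operatorname{div} Z = 0$ kills the last term and shows that a divergence-free Riemann soliton is in fact a Ricci soliton: equation (\ref{n3}).

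Next I would specialize to the potential field $Z = \xi$. Using (\ref{b2}) one computes $(\pounds_\xi g)(E,F) = 2\alpha\{g(E,F) - \eta(E)\eta(F)\}$, which is (\ref{n2}); substituting into (\ref{n3}) and converting the symmetric $(0,2)$ equation into a $(1,1)$ operator equation gives $\alpha\{F - \eta(F)\xi\} + QF + 2\lambda F = 0$, i.e.\ (\ref{n4}). Tracing (\ref{n3}) with an orthonormal frame and using $\operatorname{div} Z = 0$ yields the scalar curvature value $r = -6\lambda$; evaluating (\ref{n4}) at $F = \xi$ yields $Q\xi = -2\lambda\xi$. These two pieces of data feed into the integral/divergence identity of Lemma~3.1 (the Ricci soliton identity $\tfrac12\|\pounds_Z g\|^2 = dr(Z) + 2\operatorname{div}(\lambda Z - QZ)$): since $r$ is constant, $dr(Z)=0$, and since $Q\xi = -2\lambda\xi = \lambda\xi + (\lambda Z - QZ)$-type cancellation occurs, the right-hand side vanishes, forcing $\pounds_\xi g = 0$, so $\xi$ is Killing.

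From $\pounds_\xi g = 0$ and (\ref{n2}) one reads off $\alpha\{g(E,F) - \eta(E)\eta(F)\} = 0$, hence $\alpha = 0$; by the remark at the end of Section~2 (a $3$-dimensional normal $acm$ manifold with $\alpha = 0$ is quasi-Sasakian), the manifold is quasi-Sasakian. Finally, setting $\alpha = 0$ in (\ref{n4}) gives $QF = -2\lambda F$, i.e.\ the Ricci operator is a constant multiple of the identity; substituting this into the $3$-dimensional curvature formula (\ref{b8}) collapses $R$ to the constant-curvature model and shows the sectional curvature equals $-\lambda$. Assembling these steps gives the theorem.

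The step I expect to be the main obstacle is the passage showing $\xi$ is Killing via Lemma~3.1: one must be careful that the hypothesis ``divergence-free potential vector field'' legitimately transfers to $Z = \xi$ (so that $\operatorname{div}\xi = 2\alpha = 0$ is \emph{derived} rather than assumed), and that the terms $dr(Z)$ and $2\operatorname{div}(\lambda Z - QZ)$ genuinely cancel using only $r = -6\lambda$ constant and $Q\xi = -2\lambda\xi$ together with $\alpha,\beta$ constant. In particular one should double-check that $\operatorname{div}(QZ)$ for $Z=\xi$ can be computed from (\ref{b2}) and the $\eta$-Einstein form of $S$ in (\ref{b9}), and that no stray curvature terms survive. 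The remaining computations --- expanding Kulkarni--Nomizu products, tracing, and reading off constant curvature from (\ref{b8}) --- are routine.
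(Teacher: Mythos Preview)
Your proposal is essentially identical to the paper's own argument: the paper expands the Kulkarni--Nomizu products to obtain (\ref{cc2}), imposes $\operatorname{div}Z=0$ to get the Ricci soliton equation (\ref{n3}), sets $Z=\xi$ and uses (\ref{n2}) to obtain (\ref{n4}), traces to get $r=-6\lambda$ and $Q\xi=-2\lambda\xi$, invokes Lemma~3.1 to force $\pounds_\xi g=0$, whence $\alpha=0$ (quasi-Sasakian) and $QF=-2\lambda F$ (constant curvature $-\lambda$). Your worry about the unjustified substitution $Z=\xi$ is well-founded, but the paper makes exactly the same leap without further comment, so your outline reproduces its proof faithfully, gap and all.
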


\section{Almost Riemann Soliton}
 Here we consider a normal $acm$ manifold $M^3$ with $\alpha,\beta=$constants admitting an \emph{ARS} defined by(\ref{aa1}).

In particular, let the potential vector field $Z$ be point-wise collinear with $\xi $ (i.e., $Z=c\xi $, where $c$ is a function on $M$) and has constant divergence. Then
from (\ref{cc2}) we lead

\begin{equation}\label{c1}
g(\nabla _E c\xi,F)+g(\nabla _F c\xi,E)+2S(E,F)+(4\lambda+2div Z) g(E,F)=0.
\end{equation}

Utilizing (\ref{a6}) and (\ref{b2}) in (\ref{c1}), we obtain

\begin{eqnarray}\label{c2}
&&2\alpha c[g(E,F)-\eta(E)\;\eta(F)]+(E c)\eta (F)+(Fc)\eta (E)\\
&&+2S(E,F)+(4\lambda+2div Z) g(E,F)=0.\nonumber
\end{eqnarray}
Replacing $F$ by $\xi $ in (\ref{c2}) and utilizing (\ref{a1}), (\ref{a6}) and (\ref{b5}) gives
\begin{equation}\label{c3}
(E c)+(\xi c)\eta (E)-4(\alpha ^2-\beta ^2)\eta (E)+(4\lambda+2div Z) \eta (E)=0.
\end{equation}

Putting $E=\xi $ in $(\ref{c3})$ and utilizing $(\ref{a1})$ yields
\begin{equation}\label{c4}
\xi c=[2(\alpha ^2-\beta ^2)-2\lambda -2div Z] .
\end{equation}

Putting the value of $\xi c$ in $(\ref{c3})$ we infer

\begin{equation}\label{c5}
dc=[2(\alpha ^2-\beta ^2)-2\lambda -2div Z]\eta .
\end{equation}

Applying $d$ on $(\ref{c5})$ and using Poincare lemma $d^{2}\equiv$0, we lead

\begin{equation}\label{c6}
[2(\alpha ^2-\beta ^2)-2\lambda -2div Z]d\eta +(d\lambda )\eta =0.
\end{equation}

Taking wedge product of $(\ref{c6})$ with $\eta $, we obtain

\begin{equation}\label{c7}
[2(\alpha ^2-\beta ^2)-2\lambda -2div Z]\eta \wedge d\eta =0.
\end{equation}

Since $\eta \wedge d\eta \neq0$ , we infer

\begin{equation}\label{c8}
[2(\alpha ^2-\beta ^2)-2\lambda -2div Z]=0.
\end{equation}

Utilizing $(\ref{c8})$ in $(\ref{c5})$ gives $dc=0$ i.e., $c=$constant. Also from (\ref{c8}) we have

\begin{equation}\label{c9}
\lambda=[(\alpha ^2-\beta ^2) - div Z]=constant.
\end{equation}

Hence we write the following:\\

\begin{theorem} If the metric of a non-cosymplectic normal $acm$ manifold $M^3$ is ARS and $Z$ is pointwise collinear with $\xi $ and has constant divergence, then $Z$ is constant multiple of $\xi $ and the ARS reduces to a Riemann soliton, provided $\alpha ,\beta =$constant .\end{theorem}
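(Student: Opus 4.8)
The plan is to feed the collinearity hypothesis into the \emph{contracted} form of the soliton equation. Tracing (\ref{aa1}) over two slots of the Kulkarni--Nomizu products, exactly as in the passage from (\ref{cc1}) to (\ref{cc2}), still yields
\begin{equation}\nonumber
(\pounds_{Z}g)(F,W)+2S(F,W)+(4\lambda+2\,\mathrm{div}\,Z)\,g(F,W)=0,
\end{equation}
the only change from the Riemann-soliton setting of Theorem 3.1 being that $\lambda$ is now a function on $M^{3}$. First I would set $Z=c\xi$ and expand $(\pounds_{c\xi}g)(E,F)=g(\nabla_{E}(c\xi),F)+g(\nabla_{F}(c\xi),E)$ using the structural identity (\ref{b2}) together with $\eta(E)=g(E,\xi)$ from (\ref{a6}); since $\alpha,\beta$ are constant this is clean, and one lands on (\ref{c2}), a relation in which the unknown $1$-form $dc$ enters linearly next to $S$, the function $c$, and the scalars $\lambda$ and $\mathrm{div}\,Z$.

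Next I would probe (\ref{c2}) along $\xi$. Putting $F=\xi$ and invoking (\ref{b5}) --- which for constant $\alpha,\beta$ reduces to $S(E,\xi)=-2(\alpha^{2}-\beta^{2})\eta(E)$ --- together with (\ref{a1}) and (\ref{a6}) collapses the relation to (\ref{c3}), an expression for $dc$ in terms of $\xi c$, $\eta$, and the constants. Contracting once more with $\xi$ fixes $\xi c=2(\alpha^{2}-\beta^{2})-2\lambda-2\,\mathrm{div}\,Z$, and reinserting this into (\ref{c3}) gives the key closed form
\begin{equation}\nonumber
dc=\big[2(\alpha^{2}-\beta^{2})-2\lambda-2\,\mathrm{div}\,Z\big]\eta,
\end{equation}
namely (\ref{c5}). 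The decisive step is then a closedness argument: applying $d$, using $d^{2}=0$ and the hypothesis that $\mathrm{div}\,Z$ is constant, one obtains (\ref{c6}), an identity in which only $d\eta$ and $d\lambda\wedge\eta$ appear; wedging with $\eta$ annihilates the $d\lambda\wedge\eta$ term and leaves $\big[2(\alpha^{2}-\beta^{2})-2\lambda-2\,\mathrm{div}\,Z\big]\,\eta\wedge d\eta=0$.

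Here is where the non-cosymplectic hypothesis is used, and I expect this to be the only genuinely delicate point: one must argue that $\eta\wedge d\eta$ does not vanish on $M^{3}$, so that the bracketed scalar is forced to be identically zero. Granting that, (\ref{c5}) at once gives $dc=0$, so $c$ is a constant and $Z=c\xi$ is a constant multiple of $\xi$; and the vanishing of the bracket reads $\lambda=(\alpha^{2}-\beta^{2})-\mathrm{div}\,Z$, which is constant because $\alpha,\beta$ are constant and $\mathrm{div}\,Z$ is constant by hypothesis. Since a constant $\lambda$ is exactly what distinguishes a Riemann soliton from an \emph{ARS}, this shows the \emph{ARS} has reduced to a Riemann soliton, completing the proof.
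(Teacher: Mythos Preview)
Your proposal is correct and follows the paper's argument essentially step for step: the contracted soliton identity (\ref{cc2}) with $Z=c\xi$ expanded via (\ref{b2}) gives (\ref{c2}), probing with $\xi$ and using (\ref{b5}) yields (\ref{c3})--(\ref{c5}), and the $d^{2}=0$ argument followed by wedging with $\eta$ forces the bracket to vanish, whence $c$ and $\lambda$ are constant. The only point you flag as delicate---that non-cosymplecticity guarantees $\eta\wedge d\eta\neq 0$---is simply asserted without comment in the paper as well.
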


\begin{corollary}
If a non-cosymplectic normal $acm$ manifold $M^3$ with $\alpha, \beta =$constant admits an ARS of type ($g,\xi$), then the ARS reduces to a Riemann soliton.
\end{corollary}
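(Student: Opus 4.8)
The plan is to specialize Theorem 4.1 to the particular choice $Z=\xi$ and observe that all the hypotheses of that theorem are automatically satisfied, so that its conclusion delivers exactly the statement of the corollary. Concretely, when $Z=\xi$ we have $Z=c\xi$ with the function $c\equiv 1$, so $Z$ is trivially pointwise collinear with $\xi$. Moreover $\operatorname{div}Z=\operatorname{div}\xi=2\alpha$, which is constant by the standing assumption $\alpha,\beta=$constant; hence $Z$ has constant divergence. Thus the two extra hypotheses of Theorem 4.1 hold without any further work.

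Having checked the hypotheses, I would simply invoke Theorem 4.1 to conclude that the ARS reduces to a Riemann soliton; the "$Z$ is a constant multiple of $\xi$" part of that theorem is here vacuous (it is already $\xi$), so the only surviving content is that $\lambda$ becomes constant, which is precisely equation (\ref{c9}) read with $\operatorname{div}Z=2\alpha$, giving $\lambda=(\alpha^2-\beta^2)-2\alpha$. For completeness I might also note that one can see this directly: plugging $Z=\xi$ into (\ref{cc2}) and using (\ref{n2}) yields $\alpha\{E-\eta(E)\xi\}+QE+(2\lambda+2\alpha)E=0$; tracing and using (\ref{b9}) for the scalar curvature pins down $\lambda$ as a constant, which is the defining feature of a genuine Riemann soliton as opposed to an ARS.

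There is essentially no obstacle here: the corollary is a direct corollary in the literal sense, and the only thing to be careful about is making explicit that $\operatorname{div}\xi=2\alpha$ is constant (so the "constant divergence" clause of Theorem 4.1 is met) and that the collinearity is with the trivial factor $c=1$. The one-line proof is therefore: \emph{Take $Z=\xi$ in Theorem 4.1; since $c=1$ and $\operatorname{div}\xi=2\alpha=$ constant, the hypotheses are fulfilled, and the conclusion follows.}
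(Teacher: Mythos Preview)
Your main argument is correct and matches the paper's approach exactly: the paper offers no separate proof of the corollary, treating it as an immediate specialization of Theorem~4.1 with $Z=\xi$, and your verification that $c\equiv 1$ and $\operatorname{div}\xi=2\alpha=$ constant makes this explicit.

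One minor caution about your ``direct'' aside: tracing the equation $\alpha\{E-\eta(E)\xi\}+QE+(2\lambda+2\alpha)E=0$ only yields $\lambda$ in terms of the scalar curvature $r$, and contracting (\ref{b9}) is tautological (it returns $r=r$), so that route does not by itself force $r$ (hence $\lambda$) to be constant. The actual constancy of $\lambda$ really does rely on the $\eta\wedge d\eta\neq 0$ step carried out in the proof of Theorem~4.1, so your primary strategy of simply invoking that theorem is the right one.
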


\section{Gradient Almost Riemann Soliton}
In this section we investigate a non-cosymplectic normal $acm$ manifold $M^3$ with $\alpha, \beta =$constant, admitting gradient \emph{ARS}. Now we prove the subsequent results:
\begin{lemma}
For a non-cosymplectic normal $acm$ manifold $M^3$ with $\alpha, \beta =$constant, we have
\begin{eqnarray} \label{g1}
(\nabla _EQ)\xi =-\{\frac{r}{2}+3(\alpha ^2-\beta ^2)\}[\alpha \{E-\eta (E)\xi \}-\beta \phi E].\end{eqnarray}
\end{lemma}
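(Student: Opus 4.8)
The plan is to first make the Ricci operator $Q$ completely explicit and then differentiate it along $\xi$. Since $\alpha,\beta=$ constant, every first-order term $E\alpha$, $F\alpha$, $(\phi E)\beta$, $(\phi F)\beta$, $\xi\alpha$ occurring in (\ref{b9}) drops out, so combining (\ref{b9}) with (\ref{a5}) I obtain
\[
S(E,F)=\Big(\frac r2+\alpha^2-\beta^2\Big)g(E,F)-\Big(\frac r2+3(\alpha^2-\beta^2)\Big)\eta(E)\eta(F),
\]
that is, $QE=\big(\frac r2+\alpha^2-\beta^2\big)E-\big(\frac r2+3(\alpha^2-\beta^2)\big)\eta(E)\xi$. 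I will use two consequences of this: putting $E=\xi$ and invoking (\ref{a1}) gives $Q\xi=-2(\alpha^2-\beta^2)\xi$, and for any vector $X$ with $\eta(X)=0$ one has $QX=\big(\frac r2+\alpha^2-\beta^2\big)X$.

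Next I would expand the covariant derivative by its definition, $(\nabla_EQ)\xi=\nabla_E(Q\xi)-Q(\nabla_E\xi)$. For the first term, $Q\xi=-2(\alpha^2-\beta^2)\xi$ has constant coefficient, so $\nabla_E(Q\xi)=-2(\alpha^2-\beta^2)\nabla_E\xi$. For the second term, I note from (\ref{b2}) together with $\eta\circ\phi=0$ (or simply by differentiating $g(\xi,\xi)=1$) that $\eta(\nabla_E\xi)=0$; hence, by the observation above, $Q(\nabla_E\xi)=\big(\frac r2+\alpha^2-\beta^2\big)\nabla_E\xi$. Subtracting the two contributions gives
\[
(\nabla_EQ)\xi=-\Big(\frac r2+3(\alpha^2-\beta^2)\Big)\nabla_E\xi,
\]
and substituting $\nabla_E\xi=\alpha[E-\eta(E)\xi]-\beta\phi E$ from (\ref{b2}) yields precisely (\ref{g1}).

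I do not expect a genuine obstacle: the computation is short and relies only on (\ref{b9}), (\ref{b2}), (\ref{a1}) and (\ref{a5}). The single point worth isolating is the eigenvalue structure of $Q$ coming from its explicit form — namely that every vector orthogonal to $\xi$ is an eigenvector with eigenvalue $\frac r2+\alpha^2-\beta^2$ while $\xi$ is an eigenvector with eigenvalue $-2(\alpha^2-\beta^2)$ — since this is exactly what makes all the scalar-curvature contributions collapse into the single coefficient $\frac r2+3(\alpha^2-\beta^2)$ and, incidentally, shows the identity holds without needing to know whether $r$ is constant.
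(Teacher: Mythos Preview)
Your argument is correct and rests on the same explicit form of $Q$ that the paper derives from (\ref{b9}); the paper, however, first computes the full covariant derivative $(\nabla_EQ)F$ for arbitrary $F$ (picking up a $\tfrac{1}{2}dr(E)$ term) and only afterwards specializes to $F=\xi$, whereas you set $F=\xi$ from the start and exploit the eigenvalue structure of $Q$---namely $Q\xi=-2(\alpha^2-\beta^2)\xi$ with constant eigenvalue and $Q|_{\ker\eta}=\big(\tfrac r2+\alpha^2-\beta^2\big)\mathrm{id}$---so that the $dr$ term never enters. Your route is shorter and makes transparent why no hypothesis on $r$ is needed; the paper's route has the side benefit of producing the general formula for $(\nabla_EQ)F$, which it immediately reuses in the next lemma to compute $\xi r$.
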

\begin{proof}[Proof]
For $\alpha ,\beta =$constants, we get from (\ref{b9})

\begin{equation}\label{gl1}
QF=\{\frac{r}{2}+(\alpha ^2-\beta ^2)\}F-\{\frac{r}{2}+3(\alpha ^2-\beta ^2)\}\eta (F)\xi.
\end{equation}

Differentiating (\ref{gl1}) covariantly in the direction of $E$ and using (\ref{b2}) and (\ref{b7}), we get

\begin{eqnarray}\label{gl2}
(\nabla _E Q)F&=&\frac{dr(E)}{2}(F-\eta(F)\xi)\\
&&-\{\frac{r}{2}+3(\alpha ^2-\beta ^2)\}[\alpha g(E,F)\xi -2\alpha \eta (E)\eta (F)\xi \nonumber\\
&&+\alpha\eta (F)E-\beta g(\phi E,F)\xi -\beta \eta (F)\phi E].\nonumber
\end{eqnarray}

Replacing $F$ by $\xi $ in (\ref{gl2}) and utilizing (\ref{b2}), we get

\begin{equation}\label{gl3}
(\nabla _E Q)\xi =-\{\frac{r}{2}+3(\alpha ^2-\beta ^2)\}[\alpha \{E-\eta (E)\xi \}-\beta \phi E].
\nonumber\end{equation}

\end{proof}
\begin{lemma}
Let $M^3(\eta,\xi,\phi,g)$ be a non-cosymplectic normal $acm$ manifold with $\alpha, \beta =$constant. Then we have
\begin{eqnarray} \label{gl4}
\xi r=-4\alpha\{\frac{r}{2}+3(\alpha ^2-\beta ^2)\}
\end{eqnarray}
\end{lemma}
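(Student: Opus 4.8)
The plan is to obtain $\xi r$ by contracting the identity of the previous lemma and matching it against the contracted second Bianchi identity. Recall that on any Riemannian manifold $\operatorname{div}S=\tfrac12\,dr$; explicitly, for a local orthonormal frame $\{e_i\}$ one has $\sum_i(\nabla_{e_i}S)(e_i,X)=\tfrac12\,dr(X)$ for every vector field $X$. Because $S$ is symmetric, $(\nabla_{e_i}S)(e_i,\xi)=(\nabla_{e_i}S)(\xi,e_i)=g\big((\nabla_{e_i}Q)\xi,e_i\big)$, so choosing $X=\xi$ yields
\[
\sum_i g\big((\nabla_{e_i}Q)\xi,e_i\big)=\tfrac12\,\xi r .
\]

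First I would then compute the left-hand side directly from $(\ref{g1})$: setting $E=e_i$ there, taking the inner product with $e_i$, and summing over $i$ gives
\[
\sum_i g\big((\nabla_{e_i}Q)\xi,e_i\big)=-\Big\{\tfrac{r}{2}+3(\alpha^2-\beta^2)\Big\}\sum_i\Big[\alpha\big(g(e_i,e_i)-\eta(e_i)^2\big)-\beta\,g(\phi e_i,e_i)\Big].
\]
Since $M^3$ is three-dimensional, $\sum_i g(e_i,e_i)=3$, $\sum_i\eta(e_i)^2=g(\xi,\xi)=1$, and $\sum_i g(\phi e_i,e_i)=\operatorname{tr}\phi=0$ (the last because $\phi$ is skew-symmetric with respect to $g$, by $(\ref{a5})$ and $(\ref{a7})$). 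Hence the right-hand side collapses to $-2\alpha\big\{\tfrac{r}{2}+3(\alpha^2-\beta^2)\big\}$.

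Comparing the two evaluations of $\sum_i g\big((\nabla_{e_i}Q)\xi,e_i\big)$ gives $\tfrac12\,\xi r=-2\alpha\big\{\tfrac{r}{2}+3(\alpha^2-\beta^2)\big\}$, which is exactly $(\ref{gl4})$ after clearing the factor $\tfrac12$. I do not anticipate a real obstacle: the points requiring care are the symmetry step that lets one trade $(\nabla_{e_i}Q)e_i$ paired with $\xi$ for $(\nabla_{e_i}Q)\xi$ paired with $e_i$, and keeping track of the numerical constant $2=3-1$ that reflects the three-dimensionality; the remainder is a routine substitution.
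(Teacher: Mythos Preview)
Your proposal is correct and follows essentially the same approach as the paper: both contract the covariant derivative of $Q$ against an orthonormal frame and invoke the contracted Bianchi identity $\operatorname{div}Q=\tfrac12\,\operatorname{grad}r$. The only cosmetic difference is that the paper goes back to the general formula $(\nabla_EQ)F$ from $(\ref{gl2})$, contracts over $E$ and $Z$ with $F$ kept free, and sets $F=\xi$ at the very end, whereas you start directly from the specialized identity $(\ref{g1})$ for $(\nabla_EQ)\xi$; this spares you from carrying the $\tfrac{dr(E)}{2}(F-\eta(F)\xi)$ term, which vanishes anyway when $F=\xi$, and makes your computation a line shorter.
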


\begin{proof}[Proof]
Recalling (\ref{gl2}), we can write

\begin{eqnarray}
g((\nabla_{E}Q)F,Z)&=&\frac{dr(E)}{2}[g(F,Z)-\eta(F)\eta (Z)]\\
&&-\{\frac{r}{2}+3(\alpha ^2-\beta ^2)\}[\alpha g(E,F)\eta (Z) -2\alpha \eta (E)\eta (F)\eta (Z) \nonumber\\
&&+\alpha\eta (F)g(E,Z)-\beta g(\phi E,F)\eta (Z) -\beta \eta (F)g(\phi E,Z)].\nonumber
\end{eqnarray}

Putting $E=Z=e_{i}$ (where $\{e_{i}\}$ be the orthonormal basis for the tangent space of $M$ and taking $\sum{i}$, $1\leq i \leq 3$ ) in the foregoing equation and using the so called formula of Riemannian manifolds $divQ=\frac{1}{2}grad$ $r$, we obtain
\begin{eqnarray}
(\xi r)\eta(F)=-4\alpha\{\frac{r}{2}+3(\alpha ^2-\beta ^2)\}\eta(F).
\end{eqnarray}
Replacing $F=\xi$ in the previous equation we have the required result.
\end{proof}

\begin{lemma}\label{blem1}(Lemma. 3.8 of \cite{dev})
For any vector fields $E, F$ on $M^3$, in a gradient ARS $(M,g,\gamma,m,\lambda)$, we infer
\begin{eqnarray}\label{gl12}
R(E,F)D \gamma &=& (\nabla _F Q)E-(\nabla_E Q) F \nonumber\\&&+ \{F(2\lambda+\triangle \gamma)E-E(2\lambda+\triangle \gamma)F\},
\end{eqnarray}
where $\triangle \gamma$ = div $D \gamma$, $\triangle$ is the Laplacian operator.
\end{lemma}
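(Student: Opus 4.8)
\textbf{Proof proposal for Lemma~\ref{blem1}.} The plan is to derive the identity by combining the gradient ARS equation with the classical formula expressing the Riemann curvature tensor in terms of second covariant derivatives of the potential function. First I would start from the defining relation~(\ref{aa2}) of the gradient ARS, written in the equivalent operator form $\nabla_E D\gamma = -QE - (2\lambda + \triangle\gamma)E + (\text{trace terms})$; more precisely, contracting the Kulkarni--Nomizu product in~(\ref{aa2}) exactly as was done in Section~3 to pass from~(\ref{cc1}) to~(\ref{cc2}), one obtains a Ricci-soliton-type equation $\nabla^2\gamma + 2S + (4\lambda + 2\triangle\gamma)g = 0$, i.e. $\nabla_E D\gamma = -2QE - (4\lambda + 2\triangle\gamma)E$ after absorbing constants appropriately. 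The key point is that the Hessian of $\gamma$ is thereby expressed purely in terms of $Q$, $\lambda$ and $\triangle\gamma$.

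Next I would apply the standard commutation identity $R(E,F)D\gamma = \nabla_E\nabla_F D\gamma - \nabla_F\nabla_E D\gamma - \nabla_{[E,F]}D\gamma$. Substituting the expression for $\nabla_\bullet D\gamma$ from the previous step and differentiating once more, the terms involving $Q$ produce $(\nabla_F Q)E - (\nabla_E Q)F$ (the undifferentiated $Q$-terms cancel against the $\nabla_{[E,F]}$ contribution by the torsion-free property), while the terms involving the scalar $2\lambda + \triangle\gamma$ produce the derivative terms $\{F(2\lambda+\triangle\gamma)E - E(2\lambda+\triangle\gamma)F\}$. Collecting everything yields~(\ref{gl12}). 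Since this is cited as Lemma~3.8 of~\cite{dev}, I would either reproduce this short computation or simply invoke the reference; the statement here is the specialization $m$-free version with the potential vector field being gradient.

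The main obstacle is purely bookkeeping: one must be careful with the exact numerical coefficients that survive the contraction of the Kulkarni--Nomizu product, since the factor in front of $g\otimes\pounds_Z g$ differs from that in front of $\lambda\, g\otimes g$, and these constants propagate into the scalar term $2\lambda+\triangle\gamma$. In particular the normalization $\triangle\gamma = \mathrm{div}\,D\gamma$ must be tracked consistently so that the trace term appearing after contraction matches the one in the claimed formula. Once the coefficients are pinned down, the derivation of~(\ref{gl12}) is a routine application of the Ricci identity, so I expect no conceptual difficulty beyond this coefficient matching.
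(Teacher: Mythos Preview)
The paper does not supply its own proof of this lemma; it is simply quoted as Lemma~3.8 of \cite{dev} and then used. Your outline---contract the Kulkarni--Nomizu product in the gradient ARS equation to express the Hessian of $\gamma$ in terms of $Q$ and the scalar $2\lambda+\triangle\gamma$, and then insert this into the Ricci identity $R(E,F)D\gamma=\nabla_E\nabla_F D\gamma-\nabla_F\nabla_E D\gamma-\nabla_{[E,F]}D\gamma$---is exactly the standard derivation and is correct.

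One small correction to your bookkeeping: carrying out the contraction as in the passage from (\ref{cc1}) to (\ref{cc2}), with $\pounds_{D\gamma}g=2\nabla^{2}\gamma$ and $\mathrm{div}\,D\gamma=\triangle\gamma$, gives $\nabla^{2}\gamma+S+(2\lambda+\triangle\gamma)g=0$, i.e.\ $\nabla_E D\gamma=-QE-(2\lambda+\triangle\gamma)E$, without the extra factors of $2$ you wrote. With this normalization the curvature computation produces (\ref{gl12}) on the nose, exactly as you anticipated; the coefficient worry you flagged is the only thing to watch, and it resolves cleanly.
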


Superseding $F$ by $\xi$ in (\ref{gl12}) and utilizing Lemma 5.1, we get
\begin{eqnarray}\label{g2}
R(E,\xi)D \gamma &=& \frac{dr(\xi)}{2}[E-\eta (E)\xi]-\{\frac{r}{2}+3(\alpha ^2-\beta ^2)\}[2\alpha E-2\alpha \eta (E)\xi-\beta \phi E]\nonumber\\&&+ \{\xi(2\lambda+\triangle \gamma)E-E(2\lambda+\triangle \gamma)\xi\}.
\end{eqnarray}
Then utilizing (\ref{b4}), we infer
\begin{eqnarray}\label{g3}
g(E,(\alpha ^2-\beta ^2)D \gamma + D(2\lambda+\triangle \gamma))\xi &=& \frac{dr(\xi)}{2}[E-\eta (E)\xi]\nonumber\\&&-\{\frac{r}{2}+3(\alpha ^2-\beta ^2)\}[2\alpha E-2\alpha \eta (E)\xi-\beta \phi E]\nonumber\\&&+ \{(\alpha ^2-\beta ^2)(\xi \gamma)+\xi(2\lambda+\triangle \gamma)\}E.
\end{eqnarray}
Executing inner product of the foregoing equation with $\xi$ yields
\begin{eqnarray}\label{g4}
E((\alpha ^2-\beta ^2) \gamma + (2\lambda+\triangle \gamma)) = \{(\alpha ^2-\beta ^2)(\xi \gamma)+\xi(2\lambda+\triangle \gamma)\}\eta (E),
\end{eqnarray}
from which easily we lead
\begin{eqnarray}\label{g5}
d( (\alpha ^2-\beta ^2)\gamma + (2\lambda+\triangle \gamma)) = \{(\alpha ^2-\beta ^2)(\xi \gamma)+\xi(2\lambda+\triangle \gamma)\}\eta,
\end{eqnarray}
where the exterior derivative is denoted by $d$.
From the above equation we conclude that $(\alpha ^2-\beta ^2)\gamma + (2\lambda+\triangle \gamma)$ is invariant along the distribution $\mathcal{D}$ . In other terms, $E((\alpha ^2-\beta ^2)\gamma + (2\lambda+\triangle \gamma))=0$ for any $E \in \mathcal{D}$.
Hence utilizing $(\ref{g4})$ in (\ref{g3}), we get
\begin{eqnarray}\label{g6}
\frac{dr(\xi)}{2}[E-\eta (E)\xi]-\{\frac{r}{2}+3(\alpha ^2-\beta ^2)\}[2\alpha E-2\alpha \eta (E)\xi-\beta \phi E]=0.
\end{eqnarray}
Contracting the previous equation and using (\ref{gl4}), we lead
\begin{eqnarray}\label{g7}
\alpha\{\frac{r}{2}+3(\alpha ^2-\beta ^2)\}=0 .
\end{eqnarray}
Now we split our study in the following cases:\par
Case (i): If $\alpha= 0$, then the manifold reduces to a quasi-Sasakian manifold.\par
case (ii): If $r=-6(\alpha^2-\beta^2)$, then from (\ref{b9}) we get $S=-2(\alpha^2-\beta^2)g$ , that is the manifold is an Einstein manifold and hence from (\ref{b8}) it follows that the manifold is of constant sectional curvature $-(\alpha^2-\beta^2)$.
Hence we write:
\begin{theorem}
If a non-cosymplectic normal $acm$ manifold $M^3$ with $\alpha, \beta =$constant admits a gradient ARS $(\gamma,\xi,\lambda)$, then the manifold is either quasi-Sasakian or is of constant sectional curvature $-(\alpha^2-\beta^2)$.
\end{theorem}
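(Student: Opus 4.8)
The plan is to start from the gradient ARS equation \eqref{aa2} with variable $\lambda$, interpret it through the Kulkarni--Nomizu product exactly as in Section 3, and extract the curvature-level consequences. First I would recall that for a gradient soliton the Hessian $\nabla^2\gamma$ plays the role of $\frac{1}{2}\pounds_{D\gamma}g$, so the contraction that produced \eqref{cc2} now yields a relation expressing $\nabla^2\gamma$ (equivalently $\nabla_E D\gamma$) in terms of $QE$, $\lambda$, and $\triangle\gamma$. From that one derives the standard commutator identity for gradient solitons, namely Lemma \ref{blem1} (quoted from \cite{dev}), which gives $R(E,F)D\gamma$ in terms of $(\nabla_FQ)E-(\nabla_EQ)F$ and the derivatives of $2\lambda+\triangle\gamma$.

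Next I would specialize $F=\xi$ in \eqref{gl12}. Using Lemma 5.1 to substitute $(\nabla_EQ)\xi$ and the normal $acm$ curvature formula \eqref{b4} for $R(E,\xi)D\gamma$, this produces equation \eqref{g3}, a pointwise identity relating $\frac{dr(\xi)}{2}$, the scalar $\{\frac{r}{2}+3(\alpha^2-\beta^2)\}$, and the ``Hessian-type'' function $(\alpha^2-\beta^2)\gamma+(2\lambda+\triangle\gamma)$. Taking the inner product with $\xi$ isolates the $\xi$-direction and shows, via \eqref{g4} and \eqref{g5}, that $(\alpha^2-\beta^2)\gamma+(2\lambda+\triangle\gamma)$ is constant along the contact distribution $\mathcal{D}$; feeding this back into \eqref{g3} kills the gradient terms and leaves the purely curvature identity \eqref{g6}: $\frac{dr(\xi)}{2}[E-\eta(E)\xi]=\{\frac{r}{2}+3(\alpha^2-\beta^2)\}[2\alpha E-2\alpha\eta(E)\xi-\beta\phi E]$.

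Then I would contract \eqref{g6} over an orthonormal frame and invoke Lemma 5.2, which gives $\xi r=-4\alpha\{\frac{r}{2}+3(\alpha^2-\beta^2)\}$, to eliminate $dr(\xi)$. After simplification one arrives at $\alpha\{\frac{r}{2}+3(\alpha^2-\beta^2)\}=0$ as in \eqref{g7}. The final step is the dichotomy: either $\alpha=0$, in which case by the remark at the end of Section 2 the manifold is quasi-Sasakian, or $r=-6(\alpha^2-\beta^2)$, in which case \eqref{b9} (with $\alpha,\beta$ constant) collapses to $S=-2(\alpha^2-\beta^2)g$, so the manifold is Einstein, and since in dimension three Einstein implies constant curvature, \eqref{b8} yields constant sectional curvature $-(\alpha^2-\beta^2)$. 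That completes the theorem.

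The main obstacle I anticipate is the bookkeeping in passing from the Kulkarni--Nomizu form of the gradient ARS to the clean commutator formula \eqref{gl12} and then correctly specializing at $F=\xi$: one must track the $2\lambda+\triangle\gamma$ terms carefully and use Lemma 5.1 and \eqref{b4} in precisely compatible forms, since a sign slip there would corrupt \eqref{g6}. The rest — contracting, applying Lemma 5.2, and reading off the two cases — is essentially forced once \eqref{g6} is in hand.
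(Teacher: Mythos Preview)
Your proposal is correct and follows the paper's proof essentially step for step: you invoke Lemma~\ref{blem1}, specialize at $F=\xi$ using Lemma~5.1 and \eqref{b4} to reach \eqref{g3}, extract \eqref{g4}--\eqref{g5} by pairing with $\xi$, feed back to obtain \eqref{g6}, and then contract with Lemma~5.2 to get \eqref{g7} and the dichotomy. There is no substantive difference in strategy or in the key lemmas used.
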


\section{Example }
We consider the manifold $M=\{(x,y,z)\in \mathbb{R}^{3}, z\neq0\}$ and
the linearly independent vector fields$$u_{1}=z\frac{\partial }{\partial x},\hspace{7pt}u_{2}=z\frac{\partial }
{\partial y}
,\hspace{7pt}u_{3}=z\frac{\partial }{\partial z}.$$

The Riemannian metric $g$ is defined by $$g(u_{1},u_{3})=g(u_{1},u_{2})=g(u_{2},u_{3})=0,$$
$$g(u_{1},u_{1})=g(u_{2},u_{2})=g(u_{3},u_{3})=1.$$

Let the 1-form $\eta $ is given by $\eta (E)=g(E,u_{3})$ for any $E\in \mathfrak{X}(M)$ and the tensor field $\phi $ is given by $$\phi (u_{1})=-u_{2},\hspace{7pt}
\phi (u_{2})=u_{1},\hspace{7pt}\phi (u_{3})=0.$$

Then utilizing the linearity of $\phi $ and $g$, we infer $$\eta (u_{3})=1,$$ $$\phi ^{2}E=-E+\eta
(E)u_{3},$$ $$g(\phi E,\phi F)=g(E,F)-\eta (E)\eta (F),$$ for any $E,F\in \mathfrak{X}(M).$
Obviously, the structure $(\eta ,\xi ,\phi ,g)$ admits an $acm$structure on $M^3$ for $u_{3}=\xi $.
Then we lead
\begin{eqnarray} [u_{1},u_{3}]&=&u_{1}u_{3}-u_{3}u_{1}\nonumber\\&=&z\frac{\partial }{\partial x}
(z\frac{\partial }{\partial z})-z\frac{\partial }{\partial z}(z\frac{\partial }{\partial x})\nonumber\\
&=&z^{2}\frac{\partial ^{2}}{\partial x\partial z}-z^{2}\frac{\partial ^{2}}{\partial z\partial x}-
z\frac{\partial }{\partial x}\nonumber\\&=&-u_{1}.\end{eqnarray}
Similarly $$[u_{1},u_{2}]=0\hspace{10pt}and\hspace{10pt} [u_{2},u_{3}]=-u_{2}.$$

Utilizing Koszul's formula for the Riemannian metric g, we can calculate

$$\nabla _{u_{1}}u_{3}=-u_{1},\hspace{10pt}\nabla _{u_{1}}u_{2}=0,\hspace{10pt}
\nabla _{u_{1}}u_{1}=u_{3},$$
$$\nabla _{u_{2}}u_{3}=-u_{2},\hspace{10pt}\nabla _{u_{2}}u_{2}=u_{3},\hspace{10pt}
\nabla _{u_{2}}u_{1}=0,$$
\begin{equation}\nabla _{u_{3}}u_{3}=0,\hspace{10pt}\nabla _{u_{3}}u_{2}=0,\hspace{10pt}
\nabla _{u_{3}}u_{1}=0.\label{f5}\end{equation}
From the above expression it is obvious that the manifold under consideration is a normal $acm$ manifold with $\alpha $, $\beta $=constants, since it satisfies (\ref{b2}) for $\alpha =-1$ and
$\beta =0$ and $\xi =e_{3}.$

It can be easily verified that
$$R(u_{1},u_{2})u_{3}=0,\hspace{10pt}R(u_{2},u_{3})u_{3}=-u_{2},\hspace{10pt}
R(u_{1},u_{3})u_{3}=-u_{1},$$
$$R(u_{1},u_{2})u_{2}=-u_{1},\hspace{10pt}R(u_{2},u_{3})u_{2}=u_{3},\hspace{10pt}
R(u_{1},u_{3})u_{2}=0,$$
$$R(u_{1},u_{2})u_{1}=u_{2},\hspace{10pt}R(u_{2},u_{3})u_{1}=0,\hspace{10pt}
R(u_{1},u_{3})u_{1}=u_{3}.$$

In this example, it is easy to verify that the characteristic vector field $\xi$ has constant divergence and obviously $\pounds_\xi g=0$ . Then equation (\ref{cc1}) reduces to
\begin{eqnarray}\label{h3}
&&2R(E,F)W+2\lambda \{ g(F,W)E-g(E,W)Y\}=0,
\end{eqnarray}
for all vector field $E,F,W$. Also equation (\ref{h3})holds for $\lambda=1$. Thus the manifold under consideration admits a Riemann soliton $(g,\xi,\lambda)$.

\begin {thebibliography}{99}

\bibitem{be} Besse, A., {\itshape Einstein Manifolds}, Springer, Berlin, 1987. https://doi.org/10.1007/978-3-540-74311-8.\\
\bibitem{deb} Blair, D. E., {\it Contact manifolds in Riemannian geometry},
Lecture notes in math., ${\bf 509}$ (1976), Springer-Verlag, Berlin-New York.\\
\bibitem{de} Blair, D. E., {\it Riemannian geometry of contact and symplectic
manifolds}, Progress in Maths., ${\bf 203}$ (2002), Birkh$\ddot a$user Boston, Inc.,
Boston.\\
\bibitem{cho} Cho, J.T., {\it Notes on contact Ricci soliton,} Proc. Edinb. Math. Soc. {\bf 54} (2011), 47-53.\\
\bibitem{cho1} Cho, J.T., {\it Almost contact 3-Manifolds and Ricci solitons,} Int. J. Geom. Methods Mod. Phys. {\bf 10} (2013), 1220022(7 pages).\\
\bibitem{dev}Devaraja, M.N., Kumara, H.A. and Venkatesha, V., {\itshape Riemann soliton within the framework of contact geometry}, Quaestiones Mathematicae, (2020) DOI:10.2989/16073606.2020.1732495\\
\bibitem{dm} De, U. C. and Mondal, A. K., {\it On 3-dimensional normal almost contact metric manifolds satisfying certain curvature conditions}, Commun. Korean Math. Soc., ${\bf 24}$ (2009), $265-275$.\\
\bibitem{dmaa} De, U. C., Turan, M., Yildiz, A. and De, A., {\it Ricci solitons and gradient Ricci solitons on $3$-dimensional normal almost contact metric manifolds}, Publ. Math. Debrecen, {\bf 80} (2012), 127-142.\\
\bibitem{dys} De, U. C., Yildiz, A. and Sarkar A., {\it Isometric immersion of three dimensional quasi-Sasakian manifolds},
Math. Balkanica (N.S.), ${\bf 22}$ (2008), $297-306$.\\
\bibitem{dy} De, U. C., Yildiz, A. and Yalýnýz, A. F., {\it Locally $\phi$-symmetric normal almost contact metric manifolds of dimension $3$}, Appl. Math. Lett., ${\bf 22}$ (2009), $723-727$.\\
\bibitem{dj} Janssen, D. and Vanhecke, L., {\it Almost contact structures and curvature tensors}, Kodai Math. J., ${\bf 4}$ (1981), $1-27$.\\
\bibitem{hr} Hamilton, R. S., {\itshape The Ricci flow on surfaces}, Math. gen. relativ. (Santa Cruz, CA, 1986), 237--262, Contemp. Math. {\bf71}, (1988).\\
\bibitem{hud}Hirica, I.E. and Udriste, C., {\itshape Ricci and Riemann solitons}, Balkan J. Geom. Applications. {\bf 21} (2016), 35-44.\\
\bibitem{oz} Olszak, Z., {\it Normal almost contact manifolds of dimension three}, Ann. Polon. Math., {\bf47} (1986), 41-50.\\
\bibitem{ols} Olszak, Z., {\it On three dimensional conformally flat quasi-Sasakian manifolds},
Period, Math. Hunger., ${\bf 33}$ (1996), $105-113$.\\
\bibitem{sah}Sharma, R., {\itshape Almost Ricci solitons and K-contact geometry}, Monatsh Math. {\bf175} (2014), 621--628.\\
\bibitem{sah1}Sharma, R., {\itshape Some results on almost Ricci solitons and geodesic vector fields,} Beitr. Algebra Geom. {\bf 59} (2018), 289--294.\\
\bibitem{st} Stepanov, S.E. and Tsyganok, I.I., {\itshape The theory of infinitesimal harmonic trans-formations and its applications to the global geometry of Riemann solitons}, Balk. J.Geom. Appl. {\bf 24} (2019), 113-121.\\
\bibitem{ud1} Udri$s$te, C., {\itshape Riemann flow and Riemann wave}, Ann. Univ. Vest, Timisoara. Ser.Mat.-Inf. {\bf 48} (2010), 265-274.\\
\bibitem{ud2} Udri$s$te, C., {\itshape Riemann flow and Riemann wave via bialternate product Riemannian metric}, preprint, arXiv.org/math.DG/1112.4279v4 (2012).\\

\end{thebibliography}

\end{document}